\UseRawInputEncoding
\documentclass[12pt]{article}
\usepackage{amsmath, amssymb, amsthm, graphicx, hyperref, array, booktabs, microtype}
\usepackage[margin=0.85in]{geometry}
\newtheorem{theorem}{Theorem}[section]

\newtheorem{corollary}[theorem]{Corollary}
\newtheorem{proposition}[theorem]{Proposition}

\hypersetup{
    colorlinks=true,
    linkcolor=black,
    citecolor=black,
    urlcolor=blue,
}

\title{Is the Signed Zarankiewicz Number the Same as the Recursive-line Zarankiewicz Number?}
\author{Zhiwei Chen\footnote{School of Mathematical Sciences, South China Normal University, Guangzhou 510631, China ({\tt javenchen2002@163.com}).}
\and
Yannan Chen\footnote{School of Mathematical Sciences, South China Normal University, Guangzhou 510631, China ({\tt ynchen@scnu.edu.cn}).}
\and
Liqun Qi\footnote{Jiangsu Provincial Scientific Research Center of Applied Mathematics, Nanjing 211189, China.
Department of Applied Mathematics, The Hong Kong Polytechnic University, Hung Hom, Kowloon, Hong Kong.
({\tt maqilq@polyu.edu.hk})}
}
\date{\today}

\begin{document}

\maketitle

\begin{abstract}
L\"{o}fberg and Qi introduced the second order Zarankiewicz number \(z_2\), the recursive-line Zarankiewicz number \(z_{RL}\), and the signed Zarankiewicz number \(z_{SL}\) for doubly simple biquadratic forms. It was shown that
\[
z_2(m,n)\ge z_{SL}(m,n)\ge z_{RL}(m,n)
\]
for all \(m\) and \(n\). However, there was no evidence that there exist particular \(m\) and \(n\) such that \(z_{SL}(m,n)>z_{RL}(m,n)\). The motivation for introducing \(z_{SL}\) was as follows: during the study of the exceptional case \(m=15\), \(n=6\), L\"{o}fberg and Qi showed that
\[
z_2(15,6)=z_{SL}(15,6)=60,
\]
but the exact value of \(z_{RL}(15,6)\) was unknown then.

In this paper we show that
\[
z_{RL}(15,6)=60.
\]
This eliminates the motivation for introducing \(z_{SL}\). Whether \(z_{SL}(m,n)=z_{RL}(m,n)\) in general remains an open problem. Recently, Lebedev presented an explicit construction separating the augmented Zarankiewicz number \(z_A\) from the limited augmented Zarankiewicz number \(z_L\) at \(m=n=1893\). We hope that the separation problem for \(z_{SL}\) and \(z_{RL}\) can also be solved.

We also present the exact values of \(z_{RL}(m,6)\) for \(6\le m\le 16\).
\end{abstract}

\textbf{Keywords:} biquadratic form; sum of squares; SOS rank; Zarankiewicz number; second order Zarankiewicz number; irreducible doubly simple form; \(C_4\)-free graph

\textbf{MSC:} 14P10; 05C35; 11E25; 15A69; 90C22

\section{Introduction}

The classical Zarankiewicz problem asks for the maximum number \(z(m,n)\) of \(1\)'s in an \(m\times n\) binary matrix containing no all-one \(2\times 2\) submatrix, equivalently the maximum number of edges in a \(C_4\)-free bipartite graph with parts of sizes \(m\) and \(n\). The problem originates from Zarankiewicz \cite{za51}; early fundamental contributions include \cite{kst54,culik56,re58,gu69}. For modern accounts and recent exact values we refer to \cite{ni10,chm24,cc26mx4,qi1}.

The study of biquadratic forms and their sum-of-squares (SOS) representations has classical connections to both algebraic geometry \cite{clz95} and extremal combinatorics \cite{bo04}. L\"{o}fberg and Qi \cite{reproducibility} introduced several refinements of the Zarankiewicz number in connection with the SOS rank of biquadratic forms:
\begin{itemize}
    \item the second order Zarankiewicz number \(z_2\);
    \item the recursive-line Zarankiewicz number \(z_{RL}\);
    \item the signed Zarankiewicz number \(z_{SL}\).
\end{itemize}
These parameters are defined for doubly simple biquadratic forms, i.e.\ forms whose bipartite incidence graph is simple on both sides. The basic chain of inequalities is
\begin{equation}\label{eq:chain}
    z_2(m,n)\ge z_{SL}(m,n)\ge z_{RL}(m,n)\qquad\text{for all }m,n.
\end{equation}

The parameter \(z_{SL}\) was introduced because of the exceptional case \(m=15\), \(n=6\). In \cite{reproducibility} it was shown that
\[
z_2(15,6)=z_{SL}(15,6)=60,
\]
while the exact value of \(z_{RL}(15,6)\) was left open. Thus the possibility remained that \(z_{SL}(15,6)>z_{RL}(15,6)\), which would have justified the introduction of \(z_{SL}\) as a strictly stronger parameter.

In this paper we settle this question: we prove
\[
z_{RL}(15,6)=60.
\]
Consequently \(z_{SL}(15,6)=z_{RL}(15,6)=60\), and the original motivation for introducing \(z_{SL}\) disappears. The general question whether \(z_{SL}(m,n)=z_{RL}(m,n)\) for all \(m,n\) remains open; see Section~\ref{sec:open}.

We also determine the exact values of \(z_{RL}(m,6)\) for all \(6\le m\le 16\):
\[
\begin{array}{c|ccccccccccc}
m & 6 & 7 & 8 & 9 & 10 & 11 & 12 & 13 & 14 & 15 & 16\\\hline
z_{RL}(m,6) & 24 & 28 & 33 & 37 & 41 & 45 & 48 & 52 & 56 & 60 & 63
\end{array}
\]
These values are recorded in Table~\ref{tab:six-columns}. The case \(m=15\) is the cyclic extremal \(15\times 6\) grid displayed in Figure~\ref{fig:cyclic15}; it has
\[
z_{RL}(15,6)=60,\qquad |E_1|=30,\qquad |E_2|=30,\qquad H=0.
\]

The paper is organized as follows. Section~\ref{sec:prelim} fixes notation and recalls the recursive-line framework and the counting upper bound. Section~\ref{sec:15x6} proves \(z_{RL}(15,6)=60\) via the cyclic grid. Section~\ref{sec:six-columns} gives the exact values for \(6\le m\le 16\), including the unified counting identity, the \(m=6,7\) skeleton exclusion, and the certification procedure. Section~\ref{sec:conclusions} concludes. Section~\ref{sec:open} lists open problems. Section~\ref{sec:reproducibility} describes data and computational reproducibility.

\section{Preliminaries}\label{sec:prelim}

\subsection{Notation and the recursive-line framework}

We follow the definitions and numbering of \cite[Definitions 4.1, 4.2, 4.5, 4.11]{reproducibility}. Let \(A\) be an \(m\times n\) doubly simple biquadratic form, equivalently an \(m\times n\) grid whose cells may be
\begin{itemize}
    \item \emph{single edges} (one-edge cells), counted by \(|E_1|\);
    \item \emph{double edges} (two-edge cells), counted by \(|E_2|\);
    \item \emph{empty cells}, counted by \(H\).
\end{itemize}
A double edge occupies two cells but contributes only one edge to the objective. Thus, writing \(N=|E_1|+|E_2|\) for the total number of selected edges of a given grid,
\begin{equation}\label{eq:unified}
    N=|E_1|+|E_2|,\qquad |E_1|+2|E_2|+H=mn.
\end{equation}
The recursive-line number \(z_{RL}(m,n)\) is the maximum of \(N\) over grids satisfying the recursive-line conditions below.
For the six-column case \(n=6\), the second identity reads
\begin{equation}\label{eq:unified6}
    |E_1|+2|E_2|+H=6m.
\end{equation}

The recursive-line condition is the requirement that the double edges admit a full strengthened recursive certification \((\mathrm{RW}3^+)\), as specified in \cite[Definitions 4.5 and 4.11]{reproducibility}. In particular, the one-edge part must be \(C_4\)-free and must itself attain the ordinary extremal value \(z(m,6)\):
\begin{equation}\label{eq:E1extremal}
    |E_1|=z(m,6).
\end{equation}

\subsection{General upper bound}

The following upper bound is \cite[Proposition 3.2]{reproducibility}. We recall its short proof because it will be used repeatedly.

\begin{proposition}[Universal cell bound]\label{prop:upper}
For all \(m,n\ge 2\),
\[
z_{RL}(m,n)\le \left\lfloor \frac{mn+z(m,n)}{2}\right\rfloor .
\]
In particular, for \(n=6\),
\[
z_{RL}(m,6)\le \left\lfloor \frac{6m+z(m,6)}{2}\right\rfloor .
\]
\end{proposition}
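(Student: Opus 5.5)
The bound follows from the two identities in \eqref{eq:unified} together with the constraint on the one-edge part. Fix any grid admissible for \(z_{RL}(m,n)\), with parameters \(|E_1|\), \(|E_2|\), \(H\), and write \(N=|E_1|+|E_2|\). The idea is to eliminate \(|E_2|\) using the cell-count identity and then bound \(|E_1|\) from above by the ordinary Zarankiewicz number.

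From \(|E_1|+2|E_2|+H=mn\) and \(H\ge 0\) we get \(2|E_2|\le mn-|E_1|\). Hence
\[
2N=2|E_1|+2|E_2|\le 2|E_1|+mn-|E_1| = mn+|E_1|.
\]
Next, the one-edge part of an admissible grid is \(C_4\)-free, as recalled just before \eqref{eq:E1extremal}. By the definition of the classical Zarankiewicz number this gives \(|E_1|\le z(m,n)\); indeed \eqref{eq:E1extremal} even states equality. Substituting, we obtain \(2N\le mn+z(m,n)\). Since \(N\) is an integer, \(N\le\lfloor (mn+z(m,n))/2\rfloor\). Taking the maximum over all admissible grids yields the bound on \(z_{RL}(m,n)\), and the case \(n=6\) is the specialization via \eqref{eq:unified6}.

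The only point needing care is to confirm that \(C_4\)-freeness, or at least \(|E_1|\le z(m,n)\), is part of the recursive-line conditions for every admissible grid, and not merely a property of extremal ones. If it is not stated directly, I would first check \cite[Definitions 4.5, 4.11]{reproducibility}. Failing that, I would use the fact that a doubly simple form's single-edge incidence pattern contains no all-one \(2\times2\) submatrix. Beyond this, the argument is pure counting.
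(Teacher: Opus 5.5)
Your argument is correct and is essentially the paper's proof: simplicity (the cell identity with \(H\ge 0\)) gives \(|E_1|+2|E_2|\le mn\), and \(|E_1|\le z(m,n)\) then gives \(2N\le mn+z(m,n)\), after which you take floors. The one real difference is how \(C_4\)-freeness of the one-edge part is justified. The paper derives it from \((\mathrm{RW}3^+)\) directly: a rectangle of one-edges with monomials \(a,b,c,d\) has \(ad=bc\), so \(a^2+b^2+c^2+d^2=(a+d)^2+(b-c)^2\) would make the decomposition reducible. Your fallback through double simplicity is not justified as stated, and you should replace it with this argument.
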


\begin{proof}
Any grid counted by \(z_{RL}(m,n)\) is limited, simple, and satisfies \((\mathrm{RW}3^+)\). The condition \((\mathrm{RW}3^+)\) forces the one-edge graph \(G_1\) to be \(C_4\)-free: if four one-edges occupied a rectangle, the corresponding monomials \(a,b,c,d\) on the two diagonals would satisfy \(ad=bc\), whence
\[
a^2+b^2+c^2+d^2=(a+d)^2+(b-c)^2,
\]
and the displayed decomposition would be reducible. Thus \(|E_1|\le z(m,n)\). Simplicity gives \(|E_1|+2|E_2|\le mn\). Combining these inequalities,
\[
2\bigl(|E_1|+|E_2|\bigr)=|E_1|+\bigl(|E_1|+2|E_2|\bigr)\le z(m,n)+mn.
\]
Taking floors yields the stated bound. The six-column case is the specialisation \(n=6\).
\end{proof}

The classical Zarankiewicz numbers with six columns are
\begin{equation}\label{eq:zsix}
z(m,6)=\begin{cases}
\left\lfloor\dfrac{3m+15}{2}\right\rfloor,& 6\le m\le 14,\\[2mm]
m+15,& m\ge 15.
\end{cases}
\end{equation}
Substituting \eqref{eq:zsix} into Proposition~\ref{prop:upper} gives, for \(m\ge 15\),
\[
z_{RL}(m,6)\le \left\lfloor\frac{7m+15}{2}\right\rfloor .
\]
The coarser estimate \(z_{RL}(m,6)\le 3m+15\) is therefore valid only for \(m\ge 15\); it is not a general six-column bound (already for \(m=6\) one has \(z_{RL}(6,6)=24<33\)).

For the one-edge part, the classical \(C_4\)-free upper bound applies. If \(d_i\) is the one-edge degree of row \(i\), then no column pair can occur in two rows, hence
\begin{equation}\label{eq:paircount}
    \sum_{i=1}^m \binom{d_i}{2}\le \binom{6}{2}=15.
\end{equation}
Using \(\binom{d}{2}\ge 2d-3\) for every nonnegative integer \(d\) yields
\begin{equation}\label{eq:E1upper}
    |E_1|\le \min\left\{\left\lfloor\frac{3m+15}{2}\right\rfloor,\; m+15\right\}.
\end{equation}
For \(m\ge 15\), the bound \(m+15\) is attained by using fifteen rows to place all fifteen column pairs and putting one single edge in each remaining row. In the equality case \(m=15\), every row has one-edge degree two and every unordered pair of columns occurs in exactly one row; thus the ordinary extremal graph is the incidence graph of \(K_6\), uniquely up to relabelling \cite{bo04}. For \(m<15\), the bound \(m+15\) is not attainable; for example \(z(11,6)=24\), not \(26\).

\section{The cyclic extremal \(15\times 6\) grid: \(z_{RL}(15,6)=60\)}\label{sec:15x6}

\subsection{The cyclic construction}

Figure~\ref{fig:cyclic15} displays the cyclic extremal \(15\times 6\) grid. Each \(\bullet\) is a different one-edge cell. Two occurrences of the same uppercase label form one two-edge cell. The sixth column is named \(\infty\); all other indices are modulo \(5\).

\begin{figure}[htbp]
\centering
\[
\begin{array}{c|cccccc}
 & 0 & 1 & 2 & 3 & 4 & \infty\\\hline
R_0 & \bullet & Q_1 & P_1 & D_0 & D_0 & \bullet\\
S_0 & C_0 & \bullet & C_0 & B_0 & \bullet & A_0\\
T_0 & P_0 & Q_0 & \bullet & \bullet & A_1 & B_1\\\hline
R_1 & D_1 & \bullet & Q_2 & P_2 & D_1 & \bullet\\
S_1 & \bullet & C_1 & \bullet & C_1 & B_1 & A_1\\
T_1 & A_2 & P_1 & Q_1 & \bullet & \bullet & B_2\\\hline
R_2 & D_2 & D_2 & \bullet & Q_3 & P_3 & \bullet\\
S_2 & B_2 & \bullet & C_2 & \bullet & C_2 & A_2\\
T_2 & \bullet & A_3 & P_2 & Q_2 & \bullet & B_3\\\hline
R_3 & P_4 & D_3 & D_3 & \bullet & Q_4 & \bullet\\
S_3 & C_3 & B_3 & \bullet & C_3 & \bullet & A_3\\
T_3 & \bullet & \bullet & A_4 & P_3 & Q_3 & B_4\\\hline
R_4 & Q_0 & P_0 & D_4 & D_4 & \bullet & \bullet\\
S_4 & \bullet & C_4 & B_4 & \bullet & C_4 & A_4\\
T_4 & Q_4 & \bullet & \bullet & A_0 & P_4 & B_0
\end{array}
\]
\caption{The cyclic extremal \(15\times 6\) grid. Each \(\bullet\) is a different one-edge. Two occurrences of the same uppercase label form one two-edge. The sixth column is \(\infty\); all other indices are modulo \(5\).}
\label{fig:cyclic15}
\end{figure}

Equivalently, the three-row pattern in relative column order is
\[
\begin{array}{c|cccccc}
 & i & i+1 & i+2 & i+3 & i+4 & \infty\\\hline
R_i & \bullet & Q_{i+1} & P_{i+1} & D_i & D_i & \bullet\\
S_i & C_i & \bullet & C_i & B_i & \bullet & A_i\\
T_i & P_i & Q_i & \bullet & \bullet & A_{i+1} & B_{i+1}
\end{array}
\]

\subsection{Counting}

The grid of Figure~\ref{fig:cyclic15} occupies all \(90\) cells:
\[
|E_1|=30,\qquad |E_2|=30,\qquad H=0.
\]
The unified identity \eqref{eq:unified6} is saturated,
\[
|E_1|+2|E_2|+H=30+60+0=90=6\cdot 15,
\]
and the selected-edge total is
\[
N=|E_1|+|E_2|=60.
\]
The thirty labels \(A_i,B_i,C_i,D_i,P_i,Q_i\) (\(i\in\mathbb{Z}_5\)) each appear twice, so they form thirty distinct two-edges: ten row-degenerate edges \(C_i,D_i\), no column-degenerate two-edges, and twenty diagonal two-edges \(A_i,B_i,P_i,Q_i\). The ten complementary rectangles are \(\{P_i,Q_i\}\) on rows \(T_i,R_{i-1}\) and \(\{A_{i+1},B_{i+1}\}\) on rows \(T_i,S_{i+1}\).

\begin{theorem}\label{thm:15x6}
\(z_{RL}(15,6)=60\).
\end{theorem}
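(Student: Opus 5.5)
The proof splits into an upper bound and a matching lower bound.

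\emph{Upper bound.} Apply Proposition~\ref{prop:upper} with \(m=15\), \(n=6\). By \eqref{eq:zsix}, \(z(15,6)=15+15=30\), so
\[
z_{RL}(15,6)\le \left\lfloor \frac{90+30}{2}\right\rfloor = 60 .
\]
The equality case of that bound also fixes the structure of any extremal grid. It forces \(|E_1|=30\) and \(|E_1|+2|E_2|=90\), hence \(H=0\) and \(|E_2|=30\). By the discussion after \eqref{eq:E1upper}, the one-edge graph must then be the incidence graph of \(K_6\): every row has exactly two bullets, and every pair of columns appears exactly once. This guides the verification of the construction.

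\emph{Lower bound.} I would show that the grid of Figure~\ref{fig:cyclic15} satisfies the recursive-line conditions and realises \(N=60\). I will check the following in order.

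\begin{enumerate}
\item \textbf{Cell-level bookkeeping.} Each row contains two bullets and four labelled cells. Each of the thirty labels occurs exactly twice. This gives \(|E_1|=30\), \(|E_2|=30\) and \(H=0\), so the grid is limited and simple, and \eqref{eq:unified6} holds.

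\item \textbf{The one-edge part is the \(K_6\) incidence graph.} Under \(i\mapsto i+1\) on the rows and columns \(\mathbb{Z}_5\), with \(\infty\) fixed, the bullet pairs of \(R_i,S_i,T_i\) are \(\{i,\infty\}\), \(\{i+1,i+4\}\) and \(\{i+2,i+3\}\). These are the three orbit types of pairs in \(\mathbb{Z}_5\cup\{\infty\}\): pairs containing \(\infty\), and pairs at distance \(3\) and at distance \(1\). Hence the fifteen rows cover each of the fifteen column pairs exactly once. So \(G_1\) is \(C_4\)-free and attains \(z(15,6)\), as \eqref{eq:E1extremal} requires.

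\item \textbf{The certification \((\mathrm{RW}3^+)\) of \cite[Definitions 4.5, 4.11]{reproducibility}.} I must produce a recursive ordering of the thirty two-edges in which each new two-edge, at the moment it is added, does not create a forbidden reducible configuration with the cells already placed. The forbidden configurations are the analogues of the rectangle identity \(ad=bc\) in the proof of Proposition~\ref{prop:upper}, now involving the complementary rectangles.
\end{enumerate}

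The cyclic symmetry makes step 3 tractable. The \(\mathbb{Z}_5\)-action permutes the labels of each of the six families \(A,B,C,D,P,Q\) transitively. So it suffices to check one representative of each family, together with the interactions it has with translates. I would process the two-edges in layers. First the ten row-degenerate edges \(C_i,D_i\), each lying within a single row, so no cross-row rectangle can arise at their stage. Then the diagonal families, in the order \(P,Q,A,B\). For the diagonal edges, the only potentially dangerous rectangles are the ten complementary ones listed after Figure~\ref{fig:cyclic15}: \(\{P_i,Q_i\}\) on rows \(T_i,R_{i-1}\), and \(\{A_{i+1},B_{i+1}\}\) on rows \(T_i,S_{i+1}\). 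I must show each such pair is admissible under the recursive rule. My plan is to place \(P_i\) before \(Q_i\) and \(A_{i+1}\) before \(B_{i+1}\), and then check that when the second member arrives, its monomial is not forced to equal the product relation of the first.

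\emph{Main obstacle.} I expect step 3, the \((\mathrm{RW}3^+)\) certification of the complementary rectangles, to be the real difficulty, since everything else is routine counting. If a hand argument on one orbit representative does not go through cleanly, I would fall back on a finite machine check of the ordering.
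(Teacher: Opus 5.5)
Your upper bound and your Steps 1--2 agree with the paper. The gap is in Step 3, where you have the wrong notion of \((\mathrm{RW}3^+)\). It is not a negative condition of the form ``find an insertion order in which no new two-edge creates a forbidden reducible configuration''. Obstructions of that kind only give necessary conditions, such as \(C_4\)-freeness of \(G_1\). \((\mathrm{RW}3^+)\) is a positive certification. The closure generated by the line rule, saturation, rectangle transfer and the complementary-pair rule must do three things: (i) identify the two halves of every double edge; (ii) keep the sixty selected edges in sixty distinct classes; (iii) produce an orthogonality certificate for every one of the \(\binom{60}{2}=1770\) pairs of selected edges. Your plan generates none of these certificates. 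Even if completed, it would not show that the grid is counted by \(z_{RL}\), and a machine check of your ordering would be checking the wrong property.

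Concretely, you invert the role of the ten complementary rectangles. The twenty diagonal two-edges \(A_i,B_i,P_i,Q_i\) have their halves in different rows and columns, so the line rule cannot normalize them. What normalizes them is the complementary-pair rule, which identifies both diagonals simultaneously: those of \(\{P_i,Q_i\}\) on rows \(T_i,R_{i-1}\), and those of \(\{A_{i+1},B_{i+1}\}\) on rows \(T_i,S_{i+1}\). These rectangles are the certification mechanism, not hazards to be defused by placing \(P_i\) before \(Q_i\). For \(C_i,D_i\) the relevant fact is simply that the line rule identifies two halves lying in one row.

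Your claim that the complementary rectangles are the only relevant ones is also unsupported. The grid has many other rectangles mixing halves of two-edges with one-edges, and rectangle transfer across them is exactly what the certification needs.

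Most importantly, your proposal never addresses orthogonality of pairs of selected edges, which is the bulk of the proof. The \(765\) pairs sharing a line follow from the line rule. The remaining \(1005\) vertex-disjoint pairs require running the saturation and rectangle-transfer closure to termination, which the paper does and replays by machine. The \(\mathbb{Z}_5\) symmetry reduces this work but does not replace it.
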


\begin{proof}
We first show that the cyclic grid \(G\) of Figure~\ref{fig:cyclic15} is counted by \(z_{RL}(15,6)\), whence \(z_{RL}(15,6)\ge 60\). We then match the universal cell bound.

\paragraph{Simplicity.}
Every nonempty cell of \(G\) belongs to exactly one selected edge: each \(\bullet\) is a distinct one-edge, and each uppercase label occupies exactly two cells. Thus \((S)\) holds, and \(H=0\).

\paragraph{The one-edge graph is extremal and \(C_4\)-free.}
Every row has one-edge degree two, with supports
\[
R_i:\ \{i,\infty\},\qquad
S_i:\ \{i+1,i+4\},\qquad
T_i:\ \{i+2,i+3\}
\qquad (i\in\mathbb{Z}_5).
\]
These fifteen column pairs are pairwise distinct and exhaust \(\binom{6}{2}=15\). Hence \(G_1\) is \(C_4\)-free and
\[
|E_1|=15\cdot 2=30=z(15,6),
\]
so \(G\) is limited.

\paragraph{Identification of two-edges.}
The line-and-rectangle closure of \cite[Definitions 4.1 and 4.5]{reproducibility}, strengthened by the complementary-pair rule, resolves every two-edge of \(G\).
\begin{enumerate}
    \item \emph{Line rule.} Each row-degenerate two-edge \(C_i\) (resp.\ \(D_i\)) occupies two cells of the same row \(S_i\) (resp.\ \(R_i\)), so those two cells are identified. This accounts for all ten row-degenerate two-edges.
    \item \emph{Complementary-pair rule.} The pairs \(\{P_i,Q_i\}\) and \(\{A_{i+1},B_{i+1}\}\) are the two diagonals of genuine rectangles, so both selected diagonals are identified simultaneously. This accounts for the remaining twenty diagonal two-edges.
\end{enumerate}
Thus all thirty two-edges have identified halves.

\paragraph{Distinct selected classes.}
The only identifications that occur glue the two halves of a single two-edge. Distinct selected edges have disjoint supports, so they determine sixty distinct \(\sim\)-classes.

\paragraph{Orthogonality on a common line.}
If two distinct selected edges share a row or a column, the line rule certifies orthogonality of the corresponding occupied cells. There are \(765\) such pairs among the \(\binom{60}{2}=1770\) unordered pairs of selected edges.

\paragraph{Vertex-disjoint pairs.}
The remaining \(1005\) pairs are vertex-disjoint. For these pairs the line rule, complementary identifications, saturation, and rectangle transfer generate a finite certificate closure on the \(15\times 6\) grid. The closure terminates with an orthogonality certificate for every such pair: all \(1770\) pairs of distinct selected edges are certified orthogonal, and no two selected edges collapse to the same class. An independent cell-level replay of this closure is recorded in Section~\ref{sec:reproducibility} (command \texttt{python verify\_all.py}).

As a typical transfer, consider the complementary rectangle of \(\{P_i,Q_i\}\) on rows \(T_i\) and \(R_{i-1}\). After the complementary-pair rule identifies each diagonal, the one-edges of those two rows lie on common lines with the identified cells, so the line rule and saturation make those one-edges orthogonal to both diagonals; rectangle transfer then propagates the same orthogonality to further vertex-disjoint selected edges. The cyclic action of \(\mathbb{Z}_5\) repeats the argument in every residue class.

Therefore \(G\) satisfies \((\mathrm{RW}3^+)\). Combined with \((S)\), \(C_4\)-freeness of \(G_1\), and \(|E_1|=z(15,6)\), the definition of \(z_{RL}\) yields
\[
z_{RL}(15,6)\ge |E_1|+|E_2|=60.
\]
This cyclic grid is a different witness from the \(K_6\) incidence construction of \cite{reproducibility}, which was certified at total \(60\) by the signed criterion \((\mathrm{RW}3^\pm)\) rather than by \((\mathrm{RW}3^+)\). That construction is the case \(p=3\) of the nested perfect one-factorization family of \cite{reproducibility}: the columns are the vertices of \(K_{2p}\) and the rows its edges, while the pairing of nonincidence cells uses a perfect one-factorization of \(K_{2p}\). Such a factorization exists for every odd prime \(p\) \cite{kobayashi1989}; a survey of perfect one-factorizations is given in \cite{bryant}.

\paragraph{Upper bound.}
By \eqref{eq:zsix} one has \(z(15,6)=30\). Proposition~\ref{prop:upper} therefore gives
\[
z_{RL}(15,6)\le \left\lfloor\frac{6\cdot 15+30}{2}\right\rfloor =60.
\]
Hence equality holds.
\end{proof}

\begin{corollary}
\(z_{SL}(15,6)=z_{RL}(15,6)=60\). Consequently the original motivation for introducing \(z_{SL}\) is eliminated in the exceptional case \(m=15\), \(n=6\).
\end{corollary}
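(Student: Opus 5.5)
The corollary follows by combining Theorem~\ref{thm:15x6} with the chain of inequalities \eqref{eq:chain} and the value \(z_2(15,6)=z_{SL}(15,6)=60\) established in \cite{reproducibility}. The plan is to squeeze \(z_{SL}(15,6)\) between two quantities that are both already known to equal \(60\).

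First, I would invoke the middle inequality of \eqref{eq:chain} at \(m=15\), \(n=6\), which gives \(z_{SL}(15,6)\ge z_{RL}(15,6)\). By Theorem~\ref{thm:15x6}, the right-hand side equals \(60\), so \(z_{SL}(15,6)\ge 60\).

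For the reverse direction, I would use the left inequality of \eqref{eq:chain} together with \(z_2(15,6)=60\) from \cite{reproducibility}, giving \(z_{SL}(15,6)\le 60\). Alternatively, one can cite the value \(z_{SL}(15,6)=60\) from \cite{reproducibility} directly. An independent check is also available: any grid admissible for \(z_{SL}\) still has a \(C_4\)-free one-edge part and is simple, so the counting argument of Proposition~\ref{prop:upper} gives \(z_{SL}(15,6)\le\lfloor(90+30)/2\rfloor=60\). This check depends on the signed condition \((\mathrm{RW}3^\pm)\) still forcing \(C_4\)-freeness, which I would confirm from the definitions. Either way, \(z_{SL}(15,6)=60=z_{RL}(15,6)\).

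The second sentence of the corollary is interpretive. The only reason for introducing \(z_{SL}\) was the possibility that \(z_{SL}(15,6)>z_{RL}(15,6)\), and that possibility is now excluded. There is no real obstacle here: all the substantive work, namely the \((\mathrm{RW}3^+)\) certification of the cyclic grid, lies in Theorem~\ref{thm:15x6}. The corollary is a two-line sandwich argument.
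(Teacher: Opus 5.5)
Your proof is correct and is essentially the paper's argument: the lower bound \(z_{SL}(15,6)\ge z_{RL}(15,6)=60\) comes from the chain of inequalities and Theorem~\ref{thm:15x6}, and the upper bound \(z_{SL}(15,6)\le z_2(15,6)=60\) uses the value of \(z_2(15,6)\) established by L\"ofberg and Qi. Your optional cross-check via the universal cell bound is a harmless aside; the paper itself notes, in the proof of Corollary~\ref{cor:second-order-six-columns}, that this bound applies to every form counted by \(z_2\), and hence also to those counted by \(z_{SL}\).
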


\begin{proof}
By \eqref{eq:chain}, \(z_{SL}(15,6)\ge z_{RL}(15,6)=60\). By \cite[Theorem 7.5]{reproducibility}, \(z_{SL}(15,6)\le z_2(15,6)=60\). Hence \(z_{SL}(15,6)=60\).
\end{proof}

\section{Exact values of \(z_{RL}(m,6)\) for \(6\le m\le 16\)}\label{sec:six-columns}

\subsection{The table of exact values}

\begin{table}[htbp]
\centering
\begin{tabular}{c|ccccccccccc}
\toprule
\(m\) & 6 & 7 & 8 & 9 & 10 & 11 & 12 & 13 & 14 & 15 & 16\\
\midrule
\(z_{RL}(m,6)\) & 24 & 28 & 33 & 37 & 41 & 45 & 48 & 52 & 56 & 60 & 63\\
\(|E_1|\) & 16 & 18 & 19 & 21 & 22 & 24 & 25 & 27 & 28 & 30 & 31\\
\(|E_2|\) & 8 & 10 & 14 & 16 & 19 & 21 & 23 & 25 & 28 & 30 & 32\\
\(H\) & 4 & 4 & 1 & 1 & 0 & 0 & 1 & 1 & 0 & 0 & 1\\
\bottomrule
\end{tabular}
\caption{Exact values of \(z_{RL}(m,6)\) for \(6\le m\le 16\).}
\label{tab:six-columns}
\end{table}

\begin{theorem}\label{thm:six-columns}
For \(6\le m\le 16\),
\[
z_{RL}(m,6)=
\begin{cases}
24,&m=6,\\
28,&m=7,\\
33,&m=8,\\
37,&m=9,\\
41,&m=10,\\
45,&m=11,\\
48,&m=12,\\
52,&m=13,\\
56,&m=14,\\
60,&m=15,\\
63,&m=16.
\end{cases}
\]
\end{theorem}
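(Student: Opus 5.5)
The plan is to handle each \(m\) in two halves: a lower bound from an explicit certified grid, and an upper bound from the counting identity \eqref{eq:unified6} combined with \eqref{eq:E1extremal}.

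\emph{Upper bounds.} For every grid counted by \(z_{RL}(m,6)\), \eqref{eq:E1extremal} gives \(|E_1|=z(m,6)\). Then \eqref{eq:unified6} yields
\[
N=|E_1|+|E_2|=\frac{6m+|E_1|-H}{2}=\frac{6m+z(m,6)-H}{2},
\]
so \(N\) is maximal exactly when \(H\) is minimal, subject to the parity constraint \(H\equiv 6m+z(m,6)\pmod 2\). Substituting \eqref{eq:zsix} gives the bound of Proposition~\ref{prop:upper}. This bound already matches the table for \(m=8,\dots,16\) except at \(m=6,7\). For example, at \(m=12\) we have \(z=25\) and \(\lfloor 97/2\rfloor=48\) with \(H=1\) forced by parity; at \(m=16\) we have \(z=31\) and \(\lfloor 127/2\rfloor=63\). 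I would check each column of Table~\ref{tab:six-columns} against \(\lfloor(6m+z(m,6))/2\rfloor\). For \(m=6,7\) the cell bound gives \(\lfloor 52/2\rfloor=26\) and \(\lfloor 60/2\rfloor=30\), but the table claims \(24\) and \(28\), i.e.\ \(H=4\) rather than \(H=0\).

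\emph{Skeleton exclusion for \(m=6,7\).} This is where the real work lies. I would first enumerate the extremal \(C_4\)-free one-edge graphs \(G_1\) with \(|E_1|=z(6,6)=16\), respectively \(z(7,6)=18\), up to row and column permutation; these are few and classically known (for \(m=7\) essentially the Fano-plane incidence structure minus a column). For each such skeleton, the complement has \(20\), respectively \(24\), cells, and we must show these cannot be almost fully paired into two-edges satisfying \((\mathrm{RW}3^+)\). Concretely, we must rule out \(H=0\) and \(H=2\), which parity leaves as the only alternatives.

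The first tool I would try is a local obstruction. In the \(m=6\) extremal graphs the degree sequence forces rows whose complementary cells can only be paired diagonally through rectangles. Every diagonal two-edge must then be resolved by the complementary-pair rule, which requires its partner diagonal to be selected as well. This pairing is a rigid matching condition on the empty-cell configuration, and I expect a counting or parity argument over the holes in each column to show that at least four cells remain unresolved. If a clean argument fails, I would fall back on exhaustive search over pairings of the complement for each skeleton, checked by the closure procedure. I expect this exclusion, rather than the upper-bound arithmetic, to be the main obstacle.

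\emph{Lower bounds.} For each \(m\) I would exhibit a grid with the tabulated \((|E_1|,|E_2|,H)\). For \(m=15\) this is Theorem~\ref{thm:15x6}. For \(m=16\), I would append to the cyclic grid a row with one single edge and pair its remaining five cells, leaving one hole. For \(m\le 14\), I would delete rows from the cyclic construction or from the \(K_6\)-incidence skeleton and re-pair the freed cells. Each witness is then certified exactly as in the proof of Theorem~\ref{thm:15x6}: check simplicity, check \(C_4\)-freeness and \(|E_1|=z(m,6)\), identify halves by the line and complementary-pair rules, and run the finite closure to certify orthogonality for all vertex-disjoint pairs. That last step is done by the reproducible verifier.
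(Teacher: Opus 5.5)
\textbf{Main gap: the lower-bound constructions.} For \(8\le m\le 16\) the lower bounds are the whole content of the theorem, because the upper bound there is just Proposition~\ref{prop:upper}. Your method of deleting rows from the cyclic grid, or from any grid on the \(K_6\)-incidence skeleton, leaves every remaining row with exactly two one-edges. That gives \(|E_1|=2m\), whereas \(z(m,6)=\lfloor(3m+15)/2\rfloor>2m\) for every \(m\le 13\).

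\begin{itemize}
\item These grids violate \eqref{eq:E1extremal}, which you impose yourself.
\item For \(8\le m\le 11\) they cannot even reach the target: \(N\le(6m+2m)/2=4m\) gives \(32,36,40,44\) against \(33,37,41,45\).
\item The same objection rules out this route for the \(m=6,7\) witnesses, whose skeleton has four rows with three one-edges.
\end{itemize}

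What is missing is a supply of genuinely new grids whose one-edge skeletons contain rows of degree \(3\), with new pairings. The paper gives these explicitly (Figures~\ref{fig:6-9}--\ref{fig:14-16}) and certifies each one by a cell-level replay.

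Even where the counts work, re-pairing is not automatically admissible. For \(m=14\), deleting \(R_0\) while keeping \(H=0\) forces the freed cells \((T_1,1),(T_1,2)\) into a row-degenerate two-edge on columns \(\{1,2\}\). Row \(T_1\) has a one-edge in column \(3\), and row \(R_3\) carries \(D_3\) on \(\{1,2\}\) plus a one-edge in column \(3\). With the normalized \(+\) coefficients, these two rows then contribute
\[
\bigl(x_{T_1}^2+x_{R_3}^2\bigr)\bigl(y_3^2+(y_1+y_2)^2\bigr),
\]
which is a sum of two squares, so the form is reducible. For \(m=16\), putting the new hole in column \(\infty\) produces the same kind of duplicated pattern for every choice of the remaining cells. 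The witnesses therefore have to be found and certified one by one, and your proposal does not do this.

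\textbf{The rest of the proposal.} This part agrees with the paper. Your upper bounds via Proposition~\ref{prop:upper} and parity are correct. For \(m=6,7\), your fallback is what the paper does: a complete search over pairings and hole positions on the extremal skeleton, excluding \(H=0\) and \(H=2\).

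Your suggested local obstruction, however, rests on a false premise. Diagonal two-edges need not come in complementary pairs; edge \(7\) of the \(m=6\) witness does not. They can also be initiated by empty cells.

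The paper makes the search tractable in two steps. First it proves that the ordinary extremal skeleton is unique. Then it prunes using the fact that deleting a complete double edge preserves \((\mathrm{RW}3^+)\), so every partial pairing of an admissible completion is itself admissible.
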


\subsection{Two optimality proofs}

For \(m=8,9,\dots,16\), the constructions in Figures~\ref{fig:6-9}, \ref{fig:10-13}, and \ref{fig:14-16} satisfy \((S)\), have \(C_4\)-free one-edge graphs with \(|E_1|=z(m,6)\), and meet the cell bound of Proposition~\ref{prop:upper}:
\[
|E_1|+|E_2|=\left\lfloor\frac{6m+z(m,6)}{2}\right\rfloor .
\]
Thus the equality \(z_{RL}(m,6)=\lfloor(6m+z(m,6))/2\rfloor\) is immediate from the construction and the upper bound. Equivalently,
\[
|E_1|+2|E_2|+H=6m.
\]
\begin{corollary}\label{cor:second-order-six-columns}
For \(8\le m\le 16\), let
\[
U_m:=\left\lfloor\frac{6m+z(m,6)}{2}\right\rfloor.
\]
Then
\[
z_2(m,6)=z_{SL}(m,6)=z_{RL}(m,6)=U_m.
\]
\end{corollary}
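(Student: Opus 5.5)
We prove Corollary~\ref{cor:second-order-six-columns} by sandwiching all three numbers between \(z_{RL}(m,6)\) from below and \(U_m\) from above, using the chain~\eqref{eq:chain}:
\[
z_{RL}(m,6)\le z_{SL}(m,6)\le z_2(m,6).
\]

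\textbf{Lower bound.} Theorem~\ref{thm:six-columns} shows that for \(8\le m\le 16\) the constructions of Figures~\ref{fig:6-9}, \ref{fig:10-13}, \ref{fig:14-16} attain
\[
|E_1|+|E_2|=U_m,
\]
so \(z_{RL}(m,6)=U_m\). As a check, with \(z(m,6)\) from~\eqref{eq:zsix}:
\begin{itemize}
\item \(m=8\): \(z(8,6)=19\), and \(\lfloor(48+19)/2\rfloor=33\), matching Table~\ref{tab:six-columns}.
\item \(m=16\): \(z(16,6)=31\), and \(\lfloor(96+31)/2\rfloor=63\).
\end{itemize}
The intermediate values agree with the table in the same way. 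Hence \(U_m\le z_{SL}(m,6)\le z_2(m,6)\).

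\textbf{Upper bound.} The key step is \(z_2(m,n)\le\lfloor(mn+z(m,n))/2\rfloor\), i.e.\ the universal cell bound of Proposition~\ref{prop:upper} holds for \(z_2\) and not only for \(z_{RL}\). The proof of Proposition~\ref{prop:upper} uses only two ingredients:
\begin{itemize}
\item simplicity, giving \(|E_1|+2|E_2|\le mn\);
\item \(C_4\)-freeness of the one-edge graph \(G_1\), giving \(|E_1|\le z(m,n)\).
\end{itemize}
So I would check in the definition of \(z_2\) in \cite{reproducibility} that both are built in. Simplicity comes from the doubly simple (limited) setting. For \(C_4\)-freeness, a one-edge rectangle gives the reducible identity
\[
a^2+b^2+c^2+d^2=(a+d)^2+(b-c)^2,
\]
which lowers the SOS rank, so an extremal second-order configuration cannot contain one. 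Indeed \cite[Proposition 3.2]{reproducibility} is probably already stated for \(z_2\); the citation \cite[Theorem 7.5]{reproducibility} used for \(z_2(15,6)=60\) suggests this form. Granting it, \(z_2(m,6)\le U_m\), and the chain collapses to equality.

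\textbf{Main obstacle.} The one delicate point is making sure the upper bound really applies to \(z_2\): that \(z_2\) is defined over the same doubly simple grids with the same one-edge \(C_4\) restriction. If instead \(z_2\) were defined purely via SOS rank, I would rederive the bound directly. I would show that any biquadratic form of the relevant class attaining rank \(N\) yields a grid with
\[
|E_1|\le z(m,n),\qquad |E_1|+2|E_2|\le mn,
\]
by the same rectangle-collapse argument. The rest is the arithmetic already done above.
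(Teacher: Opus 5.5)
Your proposal is correct and follows the paper's argument. The constructions give $z_{RL}(m,6)=U_m$, the chain \eqref{eq:chain} lifts this to $z_2(m,6)\ge z_{SL}(m,6)\ge U_m$, and the universal cell bound of \cite[Proposition~3.2]{reproducibility}, which the paper simply cites as applying to every form counted by $z_2(m,6)$, closes the sandwich; so your ``main obstacle'' is settled by that citation rather than by a separate rederivation.
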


\begin{proof}
The preceding constructions give \(z_{RL}(m,6)=U_m\). By
\eqref{eq:chain},
\[
z_2(m,6)\ge z_{SL}(m,6)\ge z_{RL}(m,6)=U_m.
\]
On the other hand, the universal cell bound of
\cite[Proposition~3.2]{reproducibility} applies to every form counted by
\(z_2(m,6)\), so \(z_2(m,6)\le U_m\). The displayed chain is therefore an
equality throughout.
\end{proof}

For \(m=6,7\), the constructions attain \(24\) and \(28\) respectively; the remaining task is to exclude all larger total edge counts. This is done by a complete finite exclusion over the unique ordinary extremal skeleton, described in Section~\ref{sec:skeleton}.

\subsection{The unique skeleton for \(m=6,7\)}\label{sec:skeleton}

When \(m=7\) and \(|E_1|=18\), equality holds in both \eqref{eq:paircount} and \(\binom{d}{2}\ge 2d-3\). Hence every row has degree \(2\) or \(3\), with exactly four degree-\(3\) rows and three degree-\(2\) rows.

When \(m=6\) and \(|E_1|=16\), the sum of the differences \((d_i-2)(d_i-3)/2\) is at most \(1\). Only three candidate degree profiles are possible:
\begin{enumerate}
    \item four degree-\(3\) rows and two degree-\(2\) rows;
    \item one degree-\(1\) row and five degree-\(3\) rows;
    \item one degree-\(4\) row, two degree-\(3\) rows, and three degree-\(2\) rows.
\end{enumerate}
The second profile is impossible because each column can enter at most two degree-\(3\) rows, so five degree-\(3\) rows would require fifteen column incidences, while only twelve are available. The third profile is impossible because two degree-\(3\) rows and one degree-\(4\) row intersect in at most one column each; hence both degree-\(3\) rows must contain the two columns outside the degree-\(4\) row, creating a repeated column pair. Therefore only the first profile survives: four degree-\(3\) rows and two degree-\(2\) rows.

In both cases, the four degree-\(3\) rows make each of the six columns appear exactly twice. Viewing the four degree-\(3\) rows as four vertices, each column joins the two degree-\(3\) rows containing it, yielding a \(K_4\) without multiple edges. Hence the degree-\(3\) row system is unique. The unused column pairs are three mutually symmetric opposite pairs: for \(m=6\) one chooses two of them, and for \(m=7\) one chooses all three. Thus the ordinary extremal skeleton is unique up to relabelling rows and columns in each of the two dimensions.

\subsection{Complete finite exclusion and independent certification}\label{sec:certification}

Starting from the unique ordinary extremal skeleton of Section~\ref{sec:skeleton}, one enumerates all admissible pairings and empty-cell positions of the non-single-edge cells. Degenerate pairs in the same row, degenerate pairs in the same column, and complementary double edges are all allowed; no additional weak admissibility condition is imposed. The four necessary exclusion tasks are:

\begin{table}[htbp]
\centering
\begin{tabular}{c|cccc}
\toprule
\(m\) & target total edges & \(H\) & pruned backtracking states & result\\
\midrule
6 & 26 & 0 & 114 & no admissible configuration\\
6 & 25 & 2 & 4810 & no admissible configuration\\
7 & 30 & 0 & 1466 & no admissible configuration\\
7 & 29 & 2 & 115972 & no admissible configuration\\
\bottomrule
\end{tabular}
\caption{Finite exclusions for \(m=6,7\). The state counts are pruned backtracking states, not raw pairings, non-isomorphic graphs, or random trials.}
\label{tab:exclusions}
\end{table}

All four searches terminate normally and reproduce the same exclusion conclusion when rerun from the final data package. Larger values are already excluded by the grid upper bound, so together with the witnesses this gives
\[
z_{RL}(6,6)=24,\qquad z_{RL}(7,6)=28.
\]

\paragraph{Why pruning is valid.}
Deleting a complete selected double edge and its occupied cells preserves \((\mathrm{RW}3^+)\): replaying the retained derivation in the original certificate order, if a rectangle-adjacent diagonal encounters a deleted cell, that cell is now empty and can be certified as zero directly; otherwise the original pairing prescription is unchanged. When one edge of a complementary pair is deleted, the other edge is also initiated by an empty cell. Normalized saturation propagates only within complete edges. Therefore any subfamily of one, two, or three edges of an admissible completion, as well as any pairing prefix, must also be admissible. The program uses this necessary condition for pruning; the remaining pairing-graph isolated points, odd connected components, and untouched row permutations provide only additional necessary prunings and never delete an admissible completion.

\paragraph{Complete certification.}
All eleven grids are first checked for simplicity, \(C_4\)-freeness, and \(|E_1|=z(m,6)\); then the closure is generated according to the row/column, saturation, rectangle-transitivity, and complementary-pair rules. Finally it is required that every double edge is normalized, distinct selected edges remain in distinct classes, and every pair of distinct selected edges has an orthogonal certificate. An independent replay checks each rule line by line and does not depend on the search score. The number of uncertified edge pairs is zero for every grid.

A fast decision procedure and a cell-wise closure agree on 1907 random or structured samples, including all double-edge subfamilies of the two witnesses for \(m=6,7\). This is an additional cross-check and does not replace the complete enumeration for the upper bound. The data package contains all pairing tables, cell-wise certificates, search sources, and termination records; no floating-point tolerances, numerical Gram-matrix decisions, or odd-cycle reasoning from unknown starting points are used.

\subsection{Reproducibility commands}

Python uses only the standard library, runs normally, and does not use assertions-disabling \texttt{-O}:
\begin{verbatim}
python verify_all.py
python computation/rerun_upper.py
\end{verbatim}
The first command checks all witnesses and independently replays them; the second reruns the four exclusions with a \(\mathrm{C{+}{+}17}\) compiler supporting \texttt{unsigned\_\_int128}. Internal JSON cell numbering starts from zero; the figures number rows and columns starting from one.

\subsection{The figures}

\begin{figure}[htbp]
\centering
\begin{minipage}[t]{0.48\textwidth}\centering\scriptsize
$m=6:\ z_{RL}=24,\ |E_1|=16,\ |E_2|=8,\ H=4$\\
\[
\begin{array}{c|cccccc}
 & 1 & 2 & 3 & 4 & 5 & 6\\\hline
1 & \bullet & \bullet & 7 & 1 & 2 & 5\\
2 & \circ & 2 & \bullet & \bullet & 5 & 4\\
3 & \bullet & \circ & \bullet & 6 & \bullet & 3\\
4 & 7 & \bullet & 4 & \bullet & \bullet & \circ\\
5 & 6 & \bullet & \bullet & \circ & 8 & \bullet\\
6 & \bullet & 3 & 8 & \bullet & 1 & \bullet\\
\end{array}
\]
\end{minipage}
\hfill
\begin{minipage}[t]{0.48\textwidth}\centering\scriptsize
$m=7:\ z_{RL}=28,\ |E_1|=18,\ |E_2|=10,\ H=4$\\
\[
\begin{array}{c|cccccc}
 & 1 & 2 & 3 & 4 & 5 & 6\\\hline
1 & \bullet & \bullet & 1 & 2 & 6 & 3\\
2 & \circ & 9 & \bullet & \bullet & 5 & 2\\
3 & \bullet & \circ & \bullet & 1 & \bullet & 4\\
4 & 3 & \bullet & 8 & \bullet & \bullet & \circ\\
5 & 9 & \bullet & \bullet & 5 & 7 & \bullet\\
6 & \bullet & 6 & \circ & \bullet & 8 & \bullet\\
7 & 7 & 4 & 10 & 10 & \bullet & \bullet\\
\end{array}
\]
\end{minipage}
\\[1mm]
\begin{minipage}[t]{0.48\textwidth}\centering\scriptsize
$m=8:\ z_{RL}=33,\ |E_1|=19,\ |E_2|=14,\ H=1$\\
\[
\begin{array}{c|cccccc}
 & 1 & 2 & 3 & 4 & 5 & 6\\\hline
1 & \bullet & \bullet & 1 & 11 & 2 & 10\\
2 & \bullet & 7 & \bullet & 7 & 12 & 11\\
3 & 4 & \bullet & \bullet & 3 & 6 & 4\\
4 & \bullet & 9 & 8 & \bullet & 5 & \circ\\
5 & 13 & \bullet & 9 & 14 & \bullet & 8\\
6 & 6 & 10 & \bullet & \bullet & \bullet & 5\\
7 & 14 & \bullet & 2 & \bullet & 1 & \bullet\\
8 & \bullet & 12 & 13 & 3 & \bullet & \bullet\\
\end{array}
\]
\end{minipage}
\hfill
\begin{minipage}[t]{0.48\textwidth}\centering\scriptsize
$m=9:\ z_{RL}=37,\ |E_1|=21,\ |E_2|=16,\ H=1$\\
\[
\begin{array}{c|cccccc}
 & 1 & 2 & 3 & 4 & 5 & 6\\\hline
1 & \bullet & \bullet & 1 & \circ & 2 & 7\\
2 & \bullet & 5 & \bullet & 15 & 10 & 6\\
3 & 3 & \bullet & \bullet & 4 & 8 & 9\\
4 & \bullet & 6 & 14 & \bullet & 8 & 5\\
5 & 4 & \bullet & 16 & 3 & \bullet & 12\\
6 & 11 & 14 & \bullet & \bullet & \bullet & 13\\
7 & 9 & 10 & \bullet & 7 & 11 & \bullet\\
8 & 13 & \bullet & 2 & \bullet & 1 & \bullet\\
9 & \bullet & 16 & 15 & 12 & \bullet & \bullet\\
\end{array}
\]
\end{minipage}
\\[1mm]
\caption{Extremal grids for \(m=6,7,8,9\). Each \(\bullet\) is an independent one-edge; \(\circ\) is an empty cell. Two occurrences of the same positive integer form one double edge. Each figure is numbered independently; double edges occupy two cells and count once.}
\label{fig:6-9}
\end{figure}

\begin{figure}[htbp]
\centering
\begin{minipage}[t]{0.48\textwidth}\centering\scriptsize
$m=10:\ z_{RL}=41,\ |E_1|=22,\ |E_2|=19,\ H=0$\\
\[
\begin{array}{c|cccccc}
 & 1 & 2 & 3 & 4 & 5 & 6\\\hline
1 & \bullet & \bullet & 4 & 2 & 3 & 5\\
2 & \bullet & 17 & \bullet & 3 & 2 & 13\\
3 & \bullet & 19 & 5 & \bullet & 10 & 4\\
4 & 1 & \bullet & \bullet & \bullet & 1 & 18\\
5 & 12 & \bullet & 10 & 8 & \bullet & 9\\
6 & 13 & 16 & \bullet & 9 & \bullet & 8\\
7 & 6 & 18 & 7 & \bullet & \bullet & 17\\
8 & 7 & \bullet & 6 & 15 & 12 & \bullet\\
9 & 19 & 14 & \bullet & 11 & 14 & \bullet\\
10 & \bullet & 15 & 16 & 11 & \bullet & \bullet\\
\end{array}
\]
\end{minipage}
\hfill
\begin{minipage}[t]{0.48\textwidth}\centering\scriptsize
$m=11:\ z_{RL}=45,\ |E_1|=24,\ |E_2|=21,\ H=0$\\
\[
\begin{array}{c|cccccc}
 & 1 & 2 & 3 & 4 & 5 & 6\\\hline
1 & \bullet & \bullet & 3 & 16 & 2 & 3\\
2 & \bullet & 13 & \bullet & 12 & 15 & 12\\
3 & \bullet & 15 & 8 & \bullet & 9 & 17\\
4 & 1 & \bullet & \bullet & \bullet & 1 & 19\\
5 & 4 & \bullet & 19 & 5 & \bullet & 16\\
6 & 5 & 10 & \bullet & 4 & \bullet & 11\\
7 & 13 & 11 & 14 & \bullet & \bullet & 10\\
8 & 20 & \bullet & 9 & 18 & 8 & \bullet\\
9 & 21 & 17 & \bullet & 20 & 2 & \bullet\\
10 & 14 & 6 & 7 & \bullet & 18 & \bullet\\
11 & \bullet & 7 & 6 & 21 & \bullet & \bullet\\
\end{array}
\]
\end{minipage}
\\[1mm]
\begin{minipage}[t]{0.48\textwidth}\centering\scriptsize
$m=12:\ z_{RL}=48,\ |E_1|=25,\ |E_2|=23,\ H=1$\\
\[
\begin{array}{c|cccccc}
 & 1 & 2 & 3 & 4 & 5 & 6\\\hline
1 & \bullet & 21 & 22 & 2 & 18 & 23\\
2 & \bullet & \bullet & 1 & 8 & 4 & 5\\
3 & 10 & \bullet & \bullet & 10 & 16 & 9\\
4 & 5 & \bullet & 12 & \bullet & 7 & 14\\
5 & 6 & 11 & \bullet & \bullet & 14 & 7\\
6 & 12 & \bullet & 13 & 19 & \bullet & 8\\
7 & 16 & 20 & \bullet & 15 & \bullet & 21\\
8 & \bullet & 23 & 9 & \bullet & \bullet & 19\\
9 & 11 & \bullet & 18 & 15 & 1 & \bullet\\
10 & \bullet & 17 & \bullet & 3 & 22 & \bullet\\
11 & \circ & 13 & 6 & \bullet & 4 & \bullet\\
12 & 20 & 3 & 2 & 17 & \bullet & \bullet\\
\end{array}
\]
\end{minipage}
\hfill
\begin{minipage}[t]{0.48\textwidth}\centering\scriptsize
$m=13:\ z_{RL}=52,\ |E_1|=27,\ |E_2|=25,\ H=1$\\
\[
\begin{array}{c|cccccc}
 & 1 & 2 & 3 & 4 & 5 & 6\\\hline
1 & \bullet & \bullet & 14 & 19 & 20 & 10\\
2 & \bullet & 25 & \bullet & 3 & 7 & \circ\\
3 & 9 & \bullet & \bullet & 4 & 12 & 11\\
4 & \bullet & 6 & 8 & \bullet & 21 & 25\\
5 & 24 & \bullet & 18 & \bullet & 8 & 17\\
6 & 12 & 10 & \bullet & \bullet & 5 & 6\\
7 & \bullet & 16 & 16 & 13 & \bullet & 5\\
8 & 4 & \bullet & 22 & 24 & \bullet & 3\\
9 & 11 & 1 & \bullet & 19 & \bullet & 9\\
10 & \bullet & 23 & 13 & 15 & 2 & \bullet\\
11 & 17 & \bullet & 2 & 20 & 7 & \bullet\\
12 & 18 & 15 & \bullet & 23 & 1 & \bullet\\
13 & 14 & 22 & 21 & \bullet & \bullet & \bullet\\
\end{array}
\]
\end{minipage}
\\[1mm]
\caption{Extremal grids for \(m=10,11,12,13\). Conventions as in Figure~\ref{fig:6-9}.}
\label{fig:10-13}
\end{figure}

\begin{figure}[htbp]
\centering
\begin{minipage}[t]{0.48\textwidth}\centering\scriptsize
$m=14:\ z_{RL}=56,\ |E_1|=28,\ |E_2|=28,\ H=0$\\
\[
\begin{array}{c|cccccc}
 & 1 & 2 & 3 & 4 & 5 & 6\\\hline
1 & \bullet & 8 & 2 & 25 & 4 & 7\\
2 & \bullet & \bullet & 20 & 17 & 15 & 19\\
3 & \bullet & 22 & \bullet & 14 & 14 & 9\\
4 & 11 & \bullet & \bullet & 11 & 25 & 27\\
5 & \bullet & 24 & 5 & \bullet & 10 & 1\\
6 & 7 & \bullet & 10 & \bullet & 5 & 4\\
7 & 18 & 21 & \bullet & \bullet & 19 & 28\\
8 & 20 & \bullet & 12 & 26 & \bullet & 9\\
9 & 13 & 3 & \bullet & 6 & \bullet & 21\\
10 & 28 & 22 & 16 & \bullet & \bullet & 16\\
11 & 18 & \bullet & 1 & 12 & 27 & \bullet\\
12 & 26 & 13 & \bullet & 15 & 17 & \bullet\\
13 & 23 & 2 & 8 & \bullet & 24 & \bullet\\
14 & \bullet & 6 & 23 & 3 & \bullet & \bullet\\
\end{array}
\]
\end{minipage}
\hfill
\begin{minipage}[t]{0.48\textwidth}\centering\scriptsize
$m=15:\ z_{RL}=60,\ |E_1|=30,\ |E_2|=30,\ H=0$\\
\[
\begin{array}{c|cccccc}
 & 1 & 2 & 3 & 4 & 5 & 6\\\hline
1 & \bullet & \bullet & 8 & 25 & 1 & 13\\
2 & \bullet & 11 & \bullet & 27 & 11 & 14\\
3 & 15 & \bullet & \bullet & 16 & 26 & 19\\
4 & \bullet & 3 & 22 & \bullet & 25 & 18\\
5 & 5 & \bullet & 28 & \bullet & 24 & 1\\
6 & 21 & 14 & \bullet & \bullet & 19 & 26\\
7 & \bullet & 9 & 18 & 6 & \bullet & 27\\
8 & 16 & \bullet & 17 & 13 & \bullet & 15\\
9 & 3 & 10 & \bullet & 7 & \bullet & 8\\
10 & 2 & 20 & 10 & \bullet & \bullet & 20\\
11 & \bullet & 23 & 24 & 4 & 28 & \bullet\\
12 & 7 & \bullet & 22 & 6 & 23 & \bullet\\
13 & 12 & 9 & \bullet & 12 & 4 & \bullet\\
14 & 30 & 30 & 5 & \bullet & 21 & \bullet\\
15 & 17 & 2 & 29 & 29 & \bullet & \bullet\\
\end{array}
\]
\end{minipage}
\\[1mm]
\begin{minipage}[t]{0.48\textwidth}\centering\scriptsize
$m=16:\ z_{RL}=63,\ |E_1|=31,\ |E_2|=32,\ H=1$\\
\[
\begin{array}{c|cccccc}
 & 1 & 2 & 3 & 4 & 5 & 6\\\hline
1 & \bullet & 25 & 24 & 7 & 31 & 16\\
2 & \bullet & \bullet & 28 & 22 & 14 & 6\\
3 & \bullet & 26 & \bullet & 27 & 24 & 11\\
4 & 2 & \bullet & \bullet & 10 & 17 & 2\\
5 & \bullet & 29 & 31 & \bullet & 28 & 12\\
6 & 18 & \bullet & 14 & \bullet & 21 & 9\\
7 & 17 & 19 & \bullet & \bullet & 3 & 6\\
8 & \bullet & 23 & 12 & 19 & \bullet & 23\\
9 & 30 & \bullet & 7 & 27 & \bullet & 1\\
10 & 3 & 18 & \bullet & 8 & \bullet & 32\\
11 & 9 & 5 & 1 & \bullet & \bullet & 30\\
12 & \bullet & 11 & \circ & 4 & 21 & \bullet\\
13 & 32 & \bullet & 29 & 16 & 15 & \bullet\\
14 & 20 & 25 & \bullet & 5 & 13 & \bullet\\
15 & 8 & 13 & 4 & \bullet & 26 & \bullet\\
16 & 22 & 10 & 20 & 15 & \bullet & \bullet\\
\end{array}
\]
\end{minipage}
\\[1mm]
\caption{Extremal grids for \(m=14,15,16\). Conventions as in Figure~\ref{fig:6-9}.}
\label{fig:14-16}
\end{figure}

For \(m=6\):
\[
|E_1|=16,\quad |E_2|=8,\quad H=4,
\]
and all \(276\) distinct edge pairs pass certification.

For \(m=7\):
\[
|E_1|=18,\quad |E_2|=10,\quad H=4,
\]
and all \(378\) distinct edge pairs pass certification.

For \(m=8\):
\[
|E_1|=19,\quad |E_2|=14,\quad H=1,
\]
and all \(528\) distinct edge pairs pass certification.

For \(m=9\):
\[
|E_1|=21,\quad |E_2|=16,\quad H=1,
\]
and all \(666\) distinct edge pairs pass certification.

For \(m=10\):
\[
|E_1|=22,\quad |E_2|=19,\quad H=0,
\]
and all \(820\) distinct edge pairs pass certification.

For \(m=11\):
\[
|E_1|=24,\quad |E_2|=21,\quad H=0,
\]
and all \(990\) distinct edge pairs pass certification.

For \(m=12\):
\[
|E_1|=25,\quad |E_2|=23,\quad H=1,
\]
and all \(1128\) distinct edge pairs pass certification.

For \(m=13\):
\[
|E_1|=27,\quad |E_2|=25,\quad H=1,
\]
and all \(1326\) distinct edge pairs pass certification.

For \(m=14\):
\[
|E_1|=28,\quad |E_2|=28,\quad H=0,
\]
and all \(1540\) distinct edge pairs pass certification.

For \(m=15\):
\[
|E_1|=30,\quad |E_2|=30,\quad H=0,
\]
and all \(1770\) distinct edge pairs pass certification.

For \(m=16\):
\[
|E_1|=31,\quad |E_2|=32,\quad H=1,
\]
and all \(1953\) distinct edge pairs pass certification.

\subsection{Parity relation for \(m\ge 15\)}

For \(m\ge 15\), the one-edge count is
\[
|E_1|=m+15.
\]
From \eqref{eq:unified6} and \(|E_1|=m+15\),
\[
N=|E_1|+|E_2|=\frac{7m+15-H}{2},
\]
so
\[
H\equiv m-1\pmod{2}.
\]
Thus, when the counting upper bound is attained, odd rows should have no empty cell, and even rows should have exactly one empty cell. The implementations for \(m=15,16\) realize this; the present finite computation does not prove that all larger six-column sizes attain the upper bound.

\section{Conclusions}\label{sec:conclusions}

We have proved that
\[
z_{RL}(15,6)=60,
\]
thereby eliminating the original motivation for introducing the signed Zarankiewicz number \(z_{SL}\) in the exceptional case \(m=15\), \(n=6\). We have also determined the exact values of \(z_{RL}(m,6)\) for all \(6\le m\le 16\):
\[
\begin{array}{c|ccccccccccc}
m & 6 & 7 & 8 & 9 & 10 & 11 & 12 & 13 & 14 & 15 & 16\\\hline
z_{RL}(m,6) & 24 & 28 & 33 & 37 & 41 & 45 & 48 & 52 & 56 & 60 & 63
\end{array}
\]
For \(m=8,\dots,16\), the constructions attain the grid counting upper bound, so optimality follows directly. For \(m=6,7\), the unique ordinary extremal skeleton is identified and a complete finite exclusion rules out all larger total edge counts.

The general question whether \(z_{SL}(m,n)=z_{RL}(m,n)\) for all \(m,n\) remains open. In view of Lebedev's recent separation of \(z_A\) and \(z_L\) at \(m=n=1893\) \cite{leb26}, it is natural to ask whether an analogous separation exists for \(z_{SL}\) and \(z_{RL}\).

\section{Open Problems}\label{sec:open}

\begin{enumerate}
    \item \textbf{Equality of \(z_{SL}\) and \(z_{RL}\).} Is it true that
    \[
    z_{SL}(m,n)=z_{RL}(m,n)
    \]
    for all \(m,n\)? The case \(m=15\), \(n=6\) no longer provides a counterexample, but no general proof is known.

    \item \textbf{Separation of \(z_{SL}\) and \(z_{RL}\).} Does there exist a pair \((m,n)\) such that
    \[
    z_{SL}(m,n)>z_{RL}(m,n)?
    \]
    If so, find an explicit construction, in analogy with Lebedev's separation of \(z_A\) and \(z_L\) at \(m=n=1893\) \cite{leb26}.

    \item \textbf{Small-side six-column equality.} The equality
    \(z_2=z_{SL}=z_{RL}\) is known for \(5\times4\) and \(6\times4\)
    \cite{cc26mx4}, and for \(4\times4\), \(7\times4\), \(8\times4\),
    and \(5\times5\) \cite{xu26}. By
    Corollary~\ref{cor:second-order-six-columns}, it is also known for
    \(m\times6\) with \(8\le m\le16\). Do we have
    \[
    z_2(6,6)=z_{SL}(6,6)=z_{RL}(6,6)
    \quad\text{and}\quad
    z_2(7,6)=z_{SL}(7,6)=z_{RL}(7,6)?
    \]

    \item \textbf{Six columns beyond \(m=16\).} Determine
    \(z_{RL}(m,6)\) for \(m\ge17\). In particular, is
    \[
    z_{RL}(m,6)=\left\lfloor\frac{7m+15}{2}\right\rfloor
    \]
    for every \(m\ge15\)?

    \item \textbf{Four columns.} Exact small-parameter equalities for four
    columns are now known for \(m=4,5,6,7,8\)
    \cite{cc26mx4,xu26}. Can these methods determine further exact values
    or a general formula in that family?

    \item \textbf{Higher second-order parameters.} The chain \eqref{eq:chain} relates \(z_2\), \(z_{SL}\), and \(z_{RL}\). Are there further refinements of the Zarankiewicz number that are strictly stronger than \(z_{RL}\) and strictly weaker than \(z_2\)?
\end{enumerate}

\clearpage
\section{Data and computational reproducibility}
\label{sec:reproducibility}

The data package accompanying this paper contains:
\begin{itemize}
    \item all eleven extremal grids for \(m=6,\dots,16\), in machine-readable form;
    \item the cell-wise certificates for every double edge and every pair of distinct selected edges;
    \item the complete finite-exclusion search sources and termination records for \(m=6,7\);
    \item the independent replay tool \texttt{verify\_all.py};
    \item the exclusion rerun tool \texttt{computation/rerun\_upper.py}.
\end{itemize}
The verification commands are
\begin{verbatim}
python verify_all.py
python computation/rerun_upper.py
\end{verbatim}
The first command checks all witnesses and independently replays them. The second reruns the four exclusions with a \(\mathrm{C{+}{+}17}\) compiler supporting \texttt{unsigned\_\_int128}. Internal JSON cell numbering starts from zero; the figures number rows and columns starting from one. No floating-point tolerances, numerical Gram-matrix decisions, or odd-cycle reasoning from unknown starting points are used.

\section*{Acknowledgments}

This work was partially supported by Jiangsu Provincial Scientific Research Center of Applied Mathematics (Grant No.~BK20233002), Research Center for Intelligent Operations Research, The Hong Kong Polytechnic University (4-ZZT8), and the National Natural Science Foundation of China (Nos.~12171168, 12071159).


\begin{thebibliography}{99}
\bibitem{bo04} Bollob\'as, B. \emph{Extremal Graph Theory}. Dover Publications, 2004.

\bibitem{bryant} Bryant, D. et al. Perfect 1-factorizations. \emph{Mathematica Slovaca} \textbf{2019}, \emph{69}, 479--496.

\bibitem{chm24} Chen, G.; Horsley, D.; Mammoliti, A. Zarankiewicz numbers near the triple system threshold. \emph{J. Comb. Des.} \textbf{2024}, \emph{32}, 556--576.

\bibitem{cc26mx4} Chen, Z.; Chen, Y. Recursive-line Zarankiewicz numbers with four columns. \emph{arXiv} \textbf{2026}, arXiv:2609.11093v2.

\bibitem{clz95} Choi, M.~D.; Lam, T.~Y.; Reznick, B. Sums of squares of real polynomials. \emph{Proc. Symp. Pure Math.} \textbf{1995}, \emph{58}, 103--126.

\bibitem{culik56} \v{C}ul\'{\i}k, K. Teilweise L\"{o}sung eines verallgemeinerten Problems von K.~Zarankiewicz. \emph{Annales Polonici Mathematici} \textbf{1956}, \emph{3}, 165--168.

\bibitem{gu69} Guy, R.~K. A many-faceted problem of Zarankiewicz. In \emph{The Many Facets of Graph Theory}; Chartrand, G., Kapoor, S.~F., Eds.; Springer, 1969; pp.~129--141.

\bibitem{kobayashi1989} Kobayashi, M. On perfect one-factorization of the complete graph \(K_{2p}\). \emph{Graphs and Combinatorics} \textbf{1989}, \emph{5}, 351--353.

\bibitem{kst54} Kov\'ari, T.; S\'os, V.; Tur\'an, P. On a problem of K. Zarankiewicz. \emph{Coll. Math.} \textbf{1954}, \emph{3}, 50--57.

\bibitem{leb26} Lebedev, N. Density and separation for augmented Zarankiewicz numbers. \emph{arXiv} \textbf{2026}, arXiv:2609.16555v1.

\bibitem{reproducibility} L\"{o}fberg, J.; Qi, L. Second order Zarankiewicz number. \emph{arXiv} \textbf{2026}, arXiv:2608.30555v3.

\bibitem{ni10} Nikiforov, V. A contribution to the Zarankiewicz problem. \emph{Linear Algebra Appl.} \textbf{2010}, \emph{432}, 1405--1411.

\bibitem{qi1} Qi, L.; Cui, C.; Xu, Y. Biquadratic SOS rank and augmented Zarankiewicz number. \emph{Mathematics} \textbf{2026}, \emph{14}, 1552.

\bibitem{re58} Reiman, I. \"Uber ein Problem von K. Zarankiewicz. \emph{Acta Math. Acad. Sci. Hung.} \textbf{1958}, \emph{9}, 269--273.

\bibitem{xu26} Xu, Y.; Yan, X. Exact values, extremal classifications, and sum-of-squares reductions for second-order Zarankiewicz numbers. \emph{arXiv} \textbf{2026}, arXiv:2609.18429v1.

\bibitem{za51} Zarankiewicz, K. Problem P 101. \emph{Coll. Math.} \textbf{1951}, \emph{2}, 301.

\end{thebibliography}
\end{document}